\definecolor{ForestGreen}{rgb}{0.1,0.6,0.05}
\definecolor{EgyptBlue}{rgb}{0.063,0.1,0.6}
\newtheorem{theorem}{Theorem}
\newtheorem{proposition}[theorem]{Proposition}
\newtheorem{lemma}[theorem]{Lemma}
\theoremstyle{definition}
\newtheorem{definition}[theorem]{Definition}
\newtheorem{remark}[theorem]{Remark}
\let\OLDthebibliography\thebibliography
\renewcommand\thebibliography[1]{
	\OLDthebibliography{#1}
	\setlength{\parskip}{1pt}
	\setlength{\itemsep}{1pt plus 0.3ex}
}
\numberwithin{equation}{section}
\numberwithin{theorem}{section}
\numberwithin{equation}{section}
\numberwithin{theorem}{section}
\newcommand{\X}{X_0^s(\Omega)}
\title[Nonuniqueness for fractional parabolic equations]{Nonuniqueness for fractional parabolic equations\\ with sublinear power-type nonlinearity}
\author[J.~Benedikt]{Ji\v{r}\'{i} Benedikt}
\address{
	Department of Mathematics and NTIS, Faculty of Applied Sciences,
	\newline\indent 
	University of West Bohemia, Univerzitn\'i 8, 301 00 Plze\v{n}, Czech Republic
}
\email{benedikt@kma.zcu.cz}
\author[V.~Bobkov]{Vladimir Bobkov}
\address{
	Institute of Mathematics, Ufa Federal Research Centre, RAS,
	\newline\indent 
	Chernyshevsky str. 112, 450008 Ufa, Russia
}
\email{bobkov@matem.anrb.ru}
\author[R.~N.~Dhara]{Raj Narayan Dhara}
\address{Department of Mathematical Analysis and
	Applications of Mathematics
	\newline\indent 
	Faculty of Science, Palack\'y University
	\newline\indent 
	17.listopadu 12,
	771 46 Olomouc, Czech Republic
}
\email{dhara@math.muni.cz}
\author[P.~Girg]{Petr Girg}
\address{
	Department of Mathematics and NTIS, Faculty of Applied Sciences,
	\newline\indent 
	University of West Bohemia, Univerzitn\'i 8, 301 00 Plze\v{n}, Czech Republic
}
\email{pgirg@kma.zcu.cz}
\date{}
\subjclass[2010]{
	35A01,  
	35A02,  
	35B30,  
	35K58,  
	35R11.  
}
\keywords{Fractional Laplacian, initial-boundary value problem, non-Lipschitz reaction term, nonuniqueness.}
\thanks{
	J.~Benedikt and P.~Girg were supported by the Grant Agency of the Czech Republic (GA\v CR) under Grant No.~22-18261S.
	R.~N.~Dhara was supported IBS PAN, Warszawa, Poland and faculty project IGA\_PrF\_2022\_008.
}
\begin{document}
	\begin{abstract} 
		We show that the parabolic equation $u_t +
		(-\Delta)^s u = q(x) |u|^{\alpha-1} u$ posed in a time-space cylinder $(0,T) \times \mathbb{R}^N$ and coupled with zero initial condition and zero nonlocal Dirichlet condition 
		in $(0,T) \times (\mathbb{R}^N \setminus \Omega)$, where $\Omega$ is a bounded domain, 
		has at least one nontrivial nonnegative finite energy solution provided $\alpha \in (0,1)$ and the nonnegative bounded weight function $q$ is separated from zero on an open subset of $\Omega$. 
		This fact contrasts with the (super)linear case $\alpha \geq 1$ in which the only bounded finite energy solution is identically zero.
	\end{abstract} 
	\maketitle 
	
	\section{Introduction}
	
	Let $\Omega$ be a bounded Lipschitz domain in $\mathbb{R}^N$, $N \geq 1$. 
	Let $s \in (0,1)$, $\alpha > 0$, and $T>0$.
	Our aim is to investigate the (non)uniqueness of finite energy solutions of the model problem
	\begin{equation}\label{eq:P}
		\left\{
		\begin{aligned}
			u_t
			+
			(-\Delta)^s u &= q(x) |u|^{\alpha-1} u
			&&\text{in}~ (0,T) \times \Omega,\\
			u &= 0 &&\text{in}~  (0,T) \times (\mathbb{R}^N \setminus  \Omega),\\
			u &= 0 &&\text{at}~ t=0,
		\end{aligned}
		\right.
	\end{equation}
	where $(-\Delta)^s$ is the fractional Laplacian corresponding to the pointwise definition
	\begin{equation}\label{eq:deltas}
		(-\Delta)^s u(x) = 
		-
		\frac{s 2^{2s} \Gamma(\frac{N+2s}{2})}{\pi^\frac{N}{2} \Gamma(1-s)}
		\lim_{\varepsilon \to 0+} \int_{\mathbb{R}^N \setminus B_\varepsilon(x)} \frac{u(y)-u(x)}{|y-x|^{N+2s}} \, \textrm{d}y.
	\end{equation}
	We impose the following assumptions on the weight function $q$:
	\begin{enumerate}[label={($\mathcal{Q}_{\arabic*}$)}]
		\addtolength{\itemindent}{0em}
		\item\label{Q1} $q \in L^\infty(\Omega)$ and $q \geq 0$ in $\Omega$. 
		\item\label{Q2} There exists an open subset $O$ of $\Omega$ and a constant $q_0>0$ such that $q \geq q_0$ a.e.\  in $O$.
	\end{enumerate} 
	
	In the ``local'' case $s=1$, in which $(-\Delta)^s$ 	
	formally corresponds to the usual Laplace operator, the systematic investigation of the problem \eqref{eq:P} was initiated in the seminal work of \textsc{Fujita} \& \textsc{Watanabe} \cite{FW}, where the authors established the existence of at least one nontrivial nonnegative solution, in addition to the trivial solution of the problem, in the sublinear case $\alpha \in (0,1)$.
	Since \eqref{eq:P} with $s=1$ is autonomous, it can be shown that any positive time shift of the solution  constructed in \cite{FW} is also a solution and, thus, there is a continuum of nontrivial solutions.		
	In this way, the nonuniqueness for \eqref{eq:P} with $s=1$ is reminiscent of that for the simple Cauchy problem
	\begin{equation}\label{eq:Cauch}
		\left\{
		\begin{aligned}
			\frac{\textrm{d}y}{\textrm{d}t}(t)
			&=  |y(t)|^{\alpha-1} y(t),
			&&t > 0,\\
			y(0) &= 0.
		\end{aligned}
		\right.
	\end{equation}
	In fact, \cite{FW} covers more general boundary conditions than in \eqref{eq:P} and a more general right-hand side of the form $q(x) f(u)$, where the function $f: [0,M] \to [0,+\infty)$ is nondecreasing, concave, and satisfies $f(0)=0$, $f>0$ on $(0,M]$, and $\int_{0}^M \textrm{d}u/f(u) <+\infty$ for some $M>0$, the latter assumption meaning the violation of the Osgood condition. 
	The concavity of $f$ has been recently dropped in \cite{LRS2017}.
	We also refer to \cite{BS,mlak,RW} for nonuniqueness results for similar initial-boundary value problems with some particular choices of the nonlinearity. 
	On the other hand, if $f$ does satisfy the Osgood condition $\int_{0}^M \textrm{d}u/f(u) = +\infty$ (and $f$ is not necessarily concave), \cite{FW} establishes the uniqueness of the trivial solution in the class of bounded solutions.
	
	The results of \cite{FW} were later generalized in \cite{BBGKT,BGKT} to the case of the $p$-Laplacian, where it appears that the (non)uniqueness is a matter of interplay between the reaction and nonlinear diffusion terms, cf.\ \cite{PV} for related results for reaction-diffusion equations of porous medium type.
	Since the fractional Laplacian \eqref{eq:deltas} also represents a nonclassical diffusion (see, e.g., the overview \cite{vaz1}), it is interesting to know whether results of \cite{FW,LRS2017} can be extended to the nonlocal problem \eqref{eq:P}.
	In \cite[Section~3]{LRS2017}, the authors \textit{discuss} a possibility of such  extension. 
	However, various aspects of the problem driven by nonlocal settings seem to be not systematically well developed in the literature.
	In particular, this applies to the relation between several natural notions of solutions of \eqref{eq:P}, such as pointwise, weak, finite energy, semigroup solutions, etc. 
	The method of proof of our main result, Theorem~\ref{thm:1} formulated below, is substantially different from that used in \cite{FW,LRS2017}, namely, it does not rely on the semigroup theory and provides a nonuniqueness result in the class of so-called finite energy solutions (see Section~\ref{sec:preliminaries}), which might be a priori different from the class of semigroup solutions.
	We also refer to \cite{peral1,diaz,GT,LPPS} for the existence and qualitative properties of solutions to some related nonlocal parabolic problems.
	
	Let us formulate our main result. 
	All necessary definitions will be introduced in the subsequent section. 
	\begin{theorem}\label{thm:1}
		Let $\alpha \in (0,1)$ and let the assumptions \ref{Q1}, \ref{Q2} be satisfied.
		Then there exists $T>0$ such that \eqref{eq:P} has a nontrivial nonnegative bounded finite energy solution $u$. 
		
		Moreover, for any time shift $\tau>0$, the function $\tilde{u}$ defined as $\tilde{u}(t,x)=0$ for $t \in (0,\tau]$ and $\tilde{u}(t,x) = u(t-\tau, x)$ for $t \in (\tau,T+\tau)$ is a nontrivial nonnegative bounded finite energy solution of \eqref{eq:P} on $(0,T+\tau)$.
	\end{theorem}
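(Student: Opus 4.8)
The plan is to construct the desired solution by the method of sub- and supersolutions combined with a monotone iteration, and then to deduce the time-shift statement from the autonomous structure of \eqref{eq:P}. Throughout I write $f(u)=q(x)|u|^{\alpha-1}u$ and note the single feature of the sublinear regime that drives everything: for $u\ge 0$ one has $f(u)=q(x)u^{\alpha}$ with $\frac{\partial}{\partial u}f(u)=\alpha q(x)u^{\alpha-1}\ge 0$, so $f$ is nondecreasing in $u$, while the failure of the Osgood condition $\int_{0+}\mathrm{d}u/u^{\alpha}<+\infty$ will let a subsolution depart from the zero initial datum instantaneously.

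First I would build a nontrivial subsolution. Fix a ball $B\Subset O$ (with $O$, $q_0$ as in \ref{Q2}) and let $(\lambda_1,\varphi_1)$ be the first Dirichlet eigenpair of $(-\Delta)^s$ on $B$, i.e.\ $(-\Delta)^s\varphi_1=\lambda_1\varphi_1$ in $B$ with $\varphi_1=0$ in $\mathbb{R}^N\setminus B$, normalized by $0<\varphi_1\le 1$ and extended by zero to an element of $\X$. Looking for $\underline{u}(t,x)=g(t)\varphi_1(x)$ and using $(-\Delta)^s\varphi_1=\lambda_1\varphi_1$ in $B$, the subsolution inequality in $B$ reduces, after dividing by $\varphi_1$ and using $q\ge q_0$ on $B\subset O$ together with $\varphi_1^{\alpha-1}\ge 1$, to the scalar inequality $g'+\lambda_1 g\le q_0\,g^{\alpha}$. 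I would therefore take $g$ to be the nontrivial nonnegative solution of the Cauchy problem $g'=q_0 g^{\alpha}-\lambda_1 g$, $g(0)=0$, which exists precisely because the Osgood condition fails, is nondecreasing, and is bounded by the equilibrium $(q_0/\lambda_1)^{1/(1-\alpha)}$. On $\Omega\setminus B$ one has $\underline{u}=0$ while $(-\Delta)^s\varphi_1\le 0$ there (the nonlocal tail of a nonnegative function supported in $\bar B$), so the subsolution inequality holds trivially; the initial and exterior conditions are met by construction. Thus $\underline{u}$ is a finite energy subsolution that is nontrivial for every $t>0$.

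For the supersolution I would take $\bar{u}(t,x)=h(t)$ for $x\in\Omega$ and $\bar{u}=0$ otherwise, where $h$ solves $h'=\|q\|_{L^\infty}h^{\alpha}$, $h(0)=0$. Since $(-\Delta)^s\bar u=h(t)(-\Delta)^s\mathbf{1}_{\Omega}\ge 0$ in $\Omega$, $\bar u$ is a bounded supersolution, and comparing the governing ODEs (both issuing from $0$, with $g'\le\|q\|_{L^\infty}g^{\alpha}$) yields $\underline u\le g\le h=\bar u$. I would then run the monotone iteration $u_0=\underline u$ and, given $u_n$, let $u_{n+1}\in L^2(0,T;\X)\cap C([0,T];L^2(\Omega))$ be the unique finite energy solution of the linear problem $\partial_t u_{n+1}+(-\Delta)^s u_{n+1}=q(x)u_n^{\alpha}$ with zero initial and exterior data. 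Monotonicity of $f$ and the comparison principle for the fractional heat operator give $\underline u=u_0\le u_1\le\cdots\le\bar u$, so $u_n\uparrow u$ pointwise with $0\le\underline u\le u\le\bar u$; in particular the limit is nonnegative, bounded, and nontrivial. Uniform energy estimates (testing the $n$-th equation with $u_{n+1}$ and using the $L^\infty$ bound on $u_n^{\alpha}$) give weak compactness in $L^2(0,T;\X)$ and, via an Aubin--Lions argument, strong $L^2$ convergence; dominated convergence ($u_n^{\alpha}\le\bar u^{\alpha}\in L^\infty$) then passes the limit through the nonlinear term, so $u$ is a finite energy solution of \eqref{eq:P}, which proves the first assertion of Theorem~\ref{thm:1}.

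Finally, the time-shift claim follows from autonomy. Since $u\in C([0,T];L^2(\Omega))$ with $u(0)=0$, the function $\tilde u$ is continuous across $t=\tau$, equals the zero solution on $(0,\tau)$ (where $f(0)=0$), and equals the shift of $u$ on $(\tau,T+\tau)$; splitting the weak formulation at $t=\tau$ and using the absence of explicit time dependence in \eqref{eq:P} shows $\tilde u$ is again a finite energy solution, nontrivial because $u$ is. I expect the main obstacle to lie not in the subsolution (whose nontriviality is exactly the sublinear Osgood phenomenon) but in the finite energy comparison principle underpinning the monotone scheme: one must justify the ordering of iterates and the $L^\infty$ bound against the non-energy supersolution $\bar u$ for weak solutions of the fractional heat equation with the merely continuous, non-Lipschitz source $f$, and ensure that each linear step is well posed in $\X$.
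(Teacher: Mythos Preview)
Your overall strategy---ordered sub/supersolution pair, monotone iteration via the linear fractional heat equation, then autonomy for the time shift---is exactly the route the paper takes. The differences lie in the concrete building blocks, and both of your choices run into genuine difficulties for $s\ge 1/2$ that the paper's choices are designed to avoid.

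\textbf{Supersolution.} Your $\bar u(t,x)=h(t)\mathbf{1}_\Omega(x)$ is not a finite energy supersolution when $s\ge 1/2$: the indicator $\mathbf{1}_\Omega$ fails to belong to $H^s(\mathbb{R}^N)$, so $\bar u\notin L^2((0,T)\to \X)$ and Definition~\ref{def:super} is not met. You flag this yourself (``non-energy supersolution''), but the comparison principle you invoke for the iterates is stated for finite energy objects, so the bound $u_n\le\bar u$ is not justified as written. The paper sidesteps this entirely by taking as supersolution the positive stationary solution $\hat u\in \X$ of the fractional Lane--Emden problem $(-\Delta)^s u=q(x)u^{\alpha}$, obtained by minimizing the associated energy; this is genuinely finite energy and bounded, and makes the comparison $u^n\le\hat u$ a direct consequence of the weak maximum principle.

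\textbf{Subsolution.} The eigenfunction profile also needs more care than ``dividing by $\varphi_1$'' suggests. The subsolution inequality must hold weakly against all nonnegative $\psi\in \X$, not just $\psi\in X_0^s(B)$, so you need to control $\mathcal{E}(\varphi_1,\psi)$ for test functions supported in all of $\Omega$. Passing from the pointwise inequality to the weak one (as in the paper's \eqref{eq:kwasnic}) uses $(-\Delta)^s$ of the spatial profile in $L^2(\mathbb{R}^N)$; for the first Dirichlet eigenfunction on $B$ one has $\varphi_1\sim\operatorname{dist}(\cdot,\partial B)^s$, and the pointwise $(-\Delta)^s\varphi_1$ blows up like $\operatorname{dist}(\cdot,\partial B)^{-s}$ just outside $B$, hence is not in $L^2$ once $s\ge 1/2$. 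The paper avoids this by taking the explicit profile $\psi_{x_0,R}(x)=(R^2-|x-x_0|^2)_+^{p}$ with $p>\max\{1,2s\}$: Lemma~\ref{lem:Psi} shows $(-\Delta)^s\psi_{x_0,R}\in L^2(\mathbb{R}^N)$, is $\le 0$ outside $B_R(x_0)$, and satisfies a Fitzsimmons ratio bound $(-\Delta)^s\psi_{x_0,R}/\psi_{x_0,R}\le C$ in the ball, which is exactly what your ``divide by $\varphi_1$'' step needs but cannot claim for the eigenfunction.

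In short: your plan is right, but replace $h(t)\mathbf{1}_\Omega$ by a finite energy stationary supersolution, and replace $\varphi_1$ by a profile with $(-\Delta)^s$ in $L^2(\mathbb{R}^N)$ (or supply a separate argument bridging the pointwise and weak inequalities across $\partial B$). With those two substitutions your iteration and time-shift arguments go through as in the paper.
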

	
	Although Theorem~\ref{thm:1} produces a continuum of nontrivial finite energy solutions of \eqref{eq:P}, the (non)uniqueness of nontrivial solutions \textit{modulo time shift} is unknown to us even in the ``local'' unweighted case $s=1$ and $q \equiv 1$ a.e.\ in $\Omega$. 
	This poses an interesting open problem, taking into account, e.g., the results for the $p$-Laplacian established in \cite{BGKT}.
	
	The nonuniqueness result of 
	Theorem \ref{thm:1} is complemented by the following facts.
	\newpage
	\begin{proposition}\label{prop:uniq}
		Let $\alpha \geq 1$ and let the assumptions \ref{Q1}, \ref{Q2} be satisfied.
		The following assertions hold:
		\begin{enumerate}[label={\rm(\roman*)}]
			\item\label{prop:uniq:1}
			If $\alpha=1$, then the trivial solution is the unique solution in the class of finite energy solutions of \eqref{eq:P}.
			\item\label{prop:uniq:2}
			If $\alpha > 1$, then the trivial solution is the unique solution in the class of bounded finite energy solutions of \eqref{eq:P}.
		\end{enumerate}
	\end{proposition}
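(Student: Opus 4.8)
The plan is to run a standard parabolic energy estimate in $L^2(\Omega)$ combined with Gronwall's inequality, exploiting the nonnegativity of the fractional Dirichlet form to discard the diffusion contribution. First I would invoke the notion of finite energy solution from Section~\ref{sec:preliminaries} to justify that $u$ itself is an admissible test function, so that testing the weak formulation of \eqref{eq:P} with $u$ and integrating over $(0,t)$ yields, for a.e.\ $t \in (0,T)$,
\[
	\frac{1}{2}\|u(t)\|_{L^2(\Omega)}^2 + \int_0^t \mathcal{E}(u(\sigma),u(\sigma))\, \mathrm{d}\sigma = \int_0^t \int_\Omega q(x)\,|u(\sigma,x)|^{\alpha+1}\, \mathrm{d}x\, \mathrm{d}\sigma,
\]
where I have used the initial condition $u(0)=0$, written $\mathcal{E}$ for the Gagliardo bilinear form on $\X$ associated with $(-\Delta)^s$, and simplified $|u|^{\alpha-1}u\cdot u = |u|^{\alpha+1}$. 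Since $\mathcal{E}(u,u)\ge 0$, this immediately gives the fundamental inequality
\[
	\frac{1}{2}\|u(t)\|_{L^2(\Omega)}^2 \le \int_0^t \int_\Omega q\,|u|^{\alpha+1}\, \mathrm{d}x\, \mathrm{d}\sigma .
\]

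Next I would estimate the reaction term differently in the two cases, which is where the distinction between \ref{prop:uniq:1} and \ref{prop:uniq:2} enters. In case \ref{prop:uniq:1}, where $\alpha=1$, assumption \ref{Q1} gives directly $\int_\Omega q\,|u|^{2}\,\mathrm{d}x \le \|q\|_{L^\infty(\Omega)}\,\|u(\sigma)\|_{L^2(\Omega)}^2$, so that $y(t):=\|u(t)\|_{L^2(\Omega)}^2$ satisfies $y(t)\le 2\|q\|_{L^\infty(\Omega)}\int_0^t y(\sigma)\,\mathrm{d}\sigma$ with $y(0)=0$. In case \ref{prop:uniq:2}, where $\alpha>1$, I would use boundedness of $u$ to absorb the superlinearity: writing $|u|^{\alpha+1}=|u|^{\alpha-1}|u|^2\le \|u\|_{L^\infty((0,T)\times\Omega)}^{\alpha-1}\,|u|^2$ yields the same integral inequality for $y$, now with constant $\|q\|_{L^\infty(\Omega)}\,\|u\|_{L^\infty((0,T)\times\Omega)}^{\alpha-1}$. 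This step makes transparent why the boundedness hypothesis is indispensable for $\alpha>1$ and cannot be dropped, since without an $L^\infty$ bound the superlinear term $|u|^{\alpha+1}$ is not controlled by $\|u\|_{L^2(\Omega)}^2$. In both cases Gronwall's inequality applied to $y$ with $y(0)=0$ forces $y\equiv 0$ on $[0,T]$, so that $u$ coincides with the trivial solution.

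The step I expect to be the main obstacle is not this elementary analysis but the rigorous justification of the energy identity, namely the chain rule $\int_0^t\langle u_\sigma,u\rangle\,\mathrm{d}\sigma=\tfrac12\|u(t)\|_{L^2(\Omega)}^2$. In the finite energy framework the time derivative $u_t$ only lives in the dual space $(\X)'$, so testing with $u$ and integrating by parts in time requires a Lions--Magenes type lemma (or a regularization in time via Steklov averaging) guaranteeing that $t\mapsto\|u(t)\|_{L^2(\Omega)}^2$ is absolutely continuous with derivative $2\langle u_t,u\rangle$. I would therefore check that the definition of finite energy solution in Section~\ref{sec:preliminaries} places $u$ in $L^2(0,T;\X)$ with $u_t\in L^2(0,T;(\X)')$, which is precisely the setting in which this lemma applies; the nonnegativity of $\mathcal{E}$ and the pointwise bounds on the reaction term used above are then routine.
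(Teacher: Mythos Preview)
Your proposal is correct and follows essentially the same approach as the paper: test the weak formulation with $u$ itself (the paper uses $u\chi_{(0,\sigma)}$), drop the nonnegative Dirichlet energy, bound $q|u|^{\alpha+1}$ by $C_0\|u(t)\|_{L^2(\Omega)}^2$ (trivially for $\alpha=1$, via the $L^\infty$ bound for $\alpha>1$), and close with Gr\"onwall. The technical point you flag about the chain rule for $t\mapsto\|u(t)\|_{L^2(\Omega)}^2$ is handled in the paper exactly as you anticipate, by invoking the Lions--Magenes type integration-by-parts lemma \cite[Lemma~7.3]{roub} available in the evolution triple setting $u\in L^2((0,T)\to\X)$, $u_t\in L^2((0,T)\to(\X)^*)$.
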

	
	The rest of the paper has the following structure.
	In Section~\ref{sec:preliminaries}, we provide necessary preliminaries such as a function spaces framework and definitions of finite energy solutions, sub- and supersolutions.	
	Section~\ref{sec:proof} is devoted to the proof of Theorem~\ref{thm:1}. 
	More precisely, in Sections~\ref{sec:subsol} and~\ref{sec:supresol}, we construct a subsolution and a supersolution of \eqref{eq:P}, respectively. The existence of a solution of \eqref{eq:P} in between the pair of these sub- and supersolutions is shown in Section~\ref{sec:existence}.
	Finally, in Section~\ref{sec:uniq}, we prove Proposition~\ref{prop:uniq}.

	\section{Preliminaries}\label{sec:preliminaries}
	Recalling that $s\in (0,1)$, we consider the  fractional Sobolev space 
	\begin{equation}\label{eq:defH}
		H^s(\mathbb{R}^N) 
		= 
		\left\{
		u \in L^2(\mathbb{R}^N):~ [u]_{H^{s}(\mathbb{R}^N)}<+\infty
		\right\}
	\end{equation}
	endowed with the norm
	$$
	\|u\|_{H^s(\mathbb{R}^N)} := \|u\|_{L^2(\mathbb{R}^N)} + [u]_{H^{s}(\mathbb{R}^N)},
	$$
	where 
	\begin{align*}
		\|u\|_{L^2(\mathbb{R}^N)} := \left(\int_{\mathbb{R}^N} |u|^2 \, \textrm{d}x\right)^{1/2}
		\quad \text{and} \quad
		[u]_{H^{s}(\mathbb{R}^N)}
		:=
		\left(\iint_{\mathbb{R}^{N}\times\mathbb{R}^{N}}\frac{|u(x)-u(y)|^{2}}{|x-y|^{N+2s}}\, \textrm{d}x\textrm{d}y
		\right)^{1/2}
	\end{align*}
	are the standard Lebesgue norm and the Gagliardo seminorm, respectively.
	In order to deal with the homogeneous nonlocal Dirichlet condition in a weak formulation of \eqref{eq:P}, we consider a subspace of $H^s(\mathbb{R}^N)$ consisting of functions supported in $\Omega$.
	Following \cite{BRS}, we denote this subspace as $X_0^s(\Omega)$, i.e.,
	$$
	X_0^s(\Omega) 
	:= 
	\left\{
	u \in H^s(\mathbb{R}^N):~ u=0 ~\text{a.e.\ in}~ \mathbb{R}^N \setminus \Omega
	\right\}.
	$$
	It is known (see, e.g, \cite[Lemma~1.29]{BRS} for the case $N > 2s$ and \cite[Lemma~8.1]{CGH2020} for the case $N=1$ and $s \in [1/2,1)$) that $X_0^s(\Omega)$ is a Hilbert space with the scalar product
	\begin{align*}
		\mathcal{E}(u,v)
		:= 
		\frac{c_{N,s}}{2}
		\iint_{\mathbb{R}^{N}\times\mathbb{R}^{N}}\dfrac{(u(x)-u(y))(v(x)-v(y))}{|x-y|^{N+2s}}\, \textrm{d}x\textrm{d}y
	\end{align*}
	and the associated norm
	\begin{equation}\label{eq:Euu}
		\|u\|_{X_0^s(\Omega)}
		:=
		\mathcal{E}(u,u)^{1/2}
		=
		\sqrt{\frac{c_{N,s}}{2}}\, [u]_{H^{s}(\mathbb{R}^N)},
	\end{equation}
	where the constant $c_{N,s}$ is the same as in \eqref{eq:deltas}, i.e.,
	\begin{equation}\label{eq:cns}
		c_{N,s}
		:=
		\frac{s 2^{2s} \Gamma(\frac{N+2s}{2})}{\pi^\frac{N}{2} \Gamma(1-s)} 
		> 0.
	\end{equation}
	
	\begin{remark}\label{rem:reg}
		The embedding $X_0^s(\Omega) \hookrightarrow L^2(\Omega)$ is compact. 
		In the case $N>2s$, we refer to \cite[Lemma~1.30]{BRS} for an explicit statement. 
		The remaining case $N \leq 2s$, which reduces to $N=1$ and $s \in [1/2,1)$, can be covered as follows.
		Observe that if $0<\sigma<\sigma'<1$, then
		$H^{\sigma'}(\mathbb{R}^N) \hookrightarrow H^\sigma(\mathbb{R}^N)$ continuously, see, e.g., \cite[Proposition~2.1]{hitch}.
		Clearly, this result gives the continuous embedding $X_0^{\sigma'}(\Omega) \hookrightarrow X_0^{\sigma}(\Omega)$.
		Considering now $s \in [1/2,1)$ and choosing any $\sigma \in (0,1/2)$, we obtain the chain of embeddings
		$$
		X_0^{s}(\Omega) 
		\stackrel{\text{continuously}}{\hookrightarrow}
		X_0^{\sigma}(\Omega)
		\stackrel{\text{compactly}}{\hookrightarrow}
		L^2(\Omega),
		$$
		which gives the compactness of the embedding $X_0^{s}(\Omega) \hookrightarrow L^2(\Omega)$ in the case $N \leq 2s$. 
		We also note that the embedding $X_0^s(\Omega) \hookrightarrow L^2(\Omega)$ is dense since 
		$C_{0}^{\infty}(\Omega)$ is a subspace of $X_0^{s}(\Omega)$ and, at the same time, $C_{0}^{\infty}(\Omega)$
		is dense in $L^2(\Omega)$. 
	\end{remark}
	
	It follows from \cite[Proposition~3.1~f)]{BRS} 
	that
	the sequence of all (properly normalized) eigenfunctions of the fractional Laplacian forms 
	an orthonormal basis
	of the space $X_0^s(\Omega)$ and thus this space
	is separable. 	
	Although the statement of \cite[Proposition~3.1~f)]{BRS} requires $N > 2s$, it can be straightforwardly checked that the result remains valid also in the case $N=1$ and $s \in [1/2,1)$, thanks to Remark~\ref{rem:reg}.
	Since $X_0^s(\Omega)$ is a Hilbert space, it is 
	reflexive and thus $(X_0^s(\Omega))^*$, the dual space to $X_0^s(\Omega)$, is also separable by, e.g.,
	\cite[Proposition~1.3]{roub}.	
	We refer the reader to \cite{AVW} for some other properties of $(X_0^s(\Omega))^*$.
	
	Identifying $L^2(\Omega)$ with its dual and recalling that the embedding $X_0^s(\Omega) \hookrightarrow L^2(\Omega)$ is compact and dense, and $X_0^s(\Omega)$ is a separable Hilbert space, we define \textit{Gelfand’s triple} (or, equivalently, an \textit{evolution triple})  $X_0^s(\Omega) \hookrightarrow L^2(\Omega) \hookrightarrow (X_0^s(\Omega))^*$, see, e.g., \cite[Section~23.4]{zeid} or \cite[Section~7.2]{roub}.
	
	Hereinafter, we denote by $\langle \cdot,\cdot \rangle$ the duality pairing between $(X_0^s(\Omega))^*$ and $X_0^s(\Omega)$.
	Identifying a function $u \in L^2(\Omega)$ with a corresponding element from $(X_0^s(\Omega))^*$, we have
	\begin{equation}\label{eq:prod2}
		\langle u,v \rangle = \int_\Omega u v \,\textrm{d}x
		\quad \text{for any}\quad v \in X_0^s(\Omega),
	\end{equation}
	see, e.g., \cite[Eq.~(7.14)]{roub}.
	In general, for a Banach space $B$, we will write $\langle \cdot,\cdot \rangle_{B^* \times B}$ for the duality pairing between the dual space $B^*$ and $B$.
	
	In order to introduce the definition of finite energy solution of the problem \eqref{eq:P}, we define the Bochner spaces
	$$
	L^2((0,T) \to X_0^s(\Omega))
	\quad \text{and} \quad
	L^2((0,T) \to (X_0^s(\Omega))^*)
	$$
	in the standard way, following, e.g., \cite[Section~1.5]{roub} by setting $I = (0,T)$, $p=2$, $V = X_0^s(\Omega)$, and using $V^* = (X_0^s(\Omega))^*$. 
	Then $L^2((0,T) \to X_0^s(\Omega)) = L^2(I;V)$ and $L^2((0,T) \to (X_0^s(\Omega))^*) = L^2(I;V^*)$ in the notation of \cite{roub}.
	Notice that $L^2((0,T) \to X_0^s(\Omega))$ is a Hilbert space with the scalar product defined as $\int_0^T \mathcal{E}(v(t,\cdot), w(t,\cdot))\,\textrm{d}t$, see, e.g., \cite[Proposition~23.2~(e)]{zeid}.
	More generally, since $(X_0^s(\Omega))^*$ and $X_0^s(\Omega)$ are separable, we know from \cite[Proposition~1.38]{roub} that $L^2((0,T) \to (X_0^s(\Omega))^*)$ is the dual space to $L^2((0,T) \to X_0^s(\Omega))$, and their duality pairing is given by the formula
	$$
	\langle f,u \rangle_{L^2((0,T) \to (X_0^s(\Omega))^*) \times L^2((0,T) \to X_0^s(\Omega))}
	=
	\int_0^T \langle f(t), u(t,\cdot) \rangle \, \textrm{d}t.
	$$
	Taking any $u \in L^2((0,T) \to X_0^s(\Omega))$, we define its 
	distributional derivative $u_t$ in the standard way as in \cite[Eq.~(7.2)]{roub}, cf.\ \cite[Section~23.5]{zeid} and \cite[Appendix, pp.~1044-1052]{zeid2}.
	If we further assume that $u_t \in L^2((0,T) \to (X_0^s(\Omega))^*)$ (or, equivalently, $u \in W^{1,p,p'}(I;V,V^*)$ in the notation of \cite[Eq.~(7.1)]{roub} with $p,I,V,V^*$ as above and $p'=\frac{p}{p-1}=2$), then  $u \in C([0,T] \to L^2(\Omega))$, see \cite[Lemma~7.3]{roub}.

	Consider the auxiliary boundary value problem
	\begin{equation}\label{eq:Pf}
		\left\{
		\begin{aligned}
			u_t
			+
			(-\Delta)^s u &= f
			&&\text{in}~ (0,T) \times \Omega,\\
			u &= 0 &&\text{in}~  (0,T) \times (\mathbb{R}^N \setminus  \Omega),\\
			u &= 0 &&\text{at}~ t=0,
		\end{aligned}
		\right.
	\end{equation}
	where $f \in L^2((0,T) \to (X_0^s(\Omega))^*)$. 
	For practical purposes, it would be also convenient to work with $f \in L^2((0,T)\times \Omega)$. 
	In this case, we identify such $f$ with a corresponding element from $L^2((0,T) \to (X_0^s(\Omega))^*)$, cf.\ \cite[Example~23.4]{zeid}.
	We introduce the notion of \textit{finite energy} solution of \eqref{eq:Pf} following the  approach presented in \cite{LPPS}, see also \cite[Sections~7.1, 7.2]{roub}.
	\begin{definition}
		A function $u$ is called a finite energy solution of \eqref{eq:Pf} if $u \in L^2((0,T) \to X_0^s(\Omega))$, $u_t \in L^2((0,T) \to (X_0^s(\Omega))^*)$, $\|u(t,\cdot)\|_{L^2(\Omega)} \to 0$ as $t \to 0+$,    
		and the equality
		\begin{equation}\label{eq:weak-f}
			\int_0^T \langle u_t(t),\varphi(t,\cdot) \rangle \, \textrm{d}t
			+
			\int_0^T \mathcal{E}(u(t,\cdot), \varphi(t,\cdot)) \, \textrm{d}t
			=
			\int_0^T \langle f(t), \varphi(t,\cdot) \rangle \, \textrm{d}t
		\end{equation}
		is satisfied for all $\varphi \in L^2((0,T) \to X_0^s(\Omega))$.
	\end{definition}
	
	We observe that by the Riesz representation theorem (see, e.g., \cite[Proposition~21.17~(a)]{zeid}), there exists a linear bijective operator $J: \X \to (X_0^s(\Omega))^*$ such that $\|Ju\|_{(X_0^s(\Omega))^*} = \|u\|_{\X}$ and
	\begin{equation}\label{eq:e=inner}
		\mathcal{E}(u,\varphi) = \langle Ju, \varphi \rangle
		\quad
		\text{for any}~ u, \varphi \in \X.
	\end{equation}
	If, in addition to $u \in X_0^{s}(\Omega)$, we assume $(-\Delta)^s u \in L^2(\mathbb{R}^N)$, then $Ju = (-\Delta)^s u$ and the relation \eqref{eq:prod2} applies.
	This claim follows from  \cite[Theorem~5.3]{kwasnic} (see the equivalence of realizations $L_I$ and $L_Q$ of the fractional Laplacian).
	
	Let now $u$ be a finite energy solution of \eqref{eq:Pf}.
	Then $u(t,\cdot) \in \X$ for a.e.\ $t \in (0,T)$.
	In view of \eqref{eq:e=inner}, we have $J{u}(t) \in (X_0^s(\Omega))^*$, $\|Ju(t)\|_{(X_0^s(\Omega))^*} = \|u(t,\cdot)\|_{\X}$, and 
	\begin{equation}\label{eq:e=scal}
		\mathcal{E}(u(t,\cdot), \varphi(t,\cdot))
		=
		\langle J{u}(t), \varphi(t,\cdot)
		\rangle
	\end{equation}
	for a.e.\ $t \in (0,T)$ and all $\varphi \in L^2((0,T) \to X_0^s(\Omega))$.
	Substituting \eqref{eq:e=scal} into \eqref{eq:weak-f}, we get
	\begin{equation}\label{eq:weak-f-eq1}
		\int_0^T 
		\langle 
		u_t(t)
		+
		J{u}(t)
		-
		f(t),
		\varphi(t,\cdot)
		\rangle \,
		\textrm{d}t
		=
		0
		\quad \text{for all}~
		\varphi \in L^2((0,T) \to X_0^s(\Omega)).
	\end{equation}
	In particular,  \eqref{eq:weak-f-eq1} holds for test functions of the form $\varphi(t,x) = \psi(t) \xi(x)$, where $\psi \in C_0^\infty(0,T)$ and $\xi \in \X$.
	Therefore, arguing exactly as in \cite[Proof (I) of Proposition~23.10]{zeid}, we deduce that any finite energy solution $u$ of \eqref{eq:Pf} satisfies
	\begin{equation}\label{eq:weak-f-eq2}
		\langle 
		u_t(t)
		+
		Ju(t)
		-
		f(t),
		\xi
		\rangle 
		=
		0
		\quad
		\text{for a.e.}~ t \in (0,T)
		~\text{and all}~
		\xi \in \X.
	\end{equation}
	The identity \eqref{eq:weak-f-eq2} will be used later in the proof of Theorem~\ref{thm:1} in Section~\ref{sec:existence} to establish a key estimate for the time derivative of approximate solutions of \eqref{eq:P}.
	
	It is known from, e.g, \cite[Theorem~26]{LPPS} that \eqref{eq:Pf} possesses a unique finite energy solution. 
	Moreover, again by \cite[Theorem~26]{LPPS}, the weak maximum principle holds, i.e., if $f$ is nonnegative in the distributional sense, then the finite energy solution is nonnegative.  
	Since the problem \eqref{eq:Pf} is linear, the maximum principle is equivalent to the comparison principle. 
	The weak comparison principle will be important in our consideration of the nonlinear problem \eqref{eq:P}.

	\medskip
	Now we introduce the notion of finite energy solution, subsolution, and supersolution of \eqref{eq:P}. 
	\begin{definition}\label{def:sol-parab}
		A function $u$ is called a finite energy solution of \eqref{eq:P} if $u \in L^2((0,T) \to X_0^s(\Omega))$, $u_t \in L^2((0,T) \to (X_0^s(\Omega))^*)$, $\|u(t,\cdot)\|_{L^2(\Omega)} \to 0$ as $t \to 0+$,    
		and the equality
		\begin{equation}\label{eq:weak}
			\int_0^T \langle u_t(t),\varphi(t,\cdot) \rangle \, \textrm{d}t
			+
			\int_0^T \mathcal{E}(u(t,\cdot), \varphi(t,\cdot)) \, \textrm{d}t
			=
			\int_0^T \int_\Omega q(x) |u(t,x)|^{\alpha-1} u(t,x) \varphi(t,x) \,\textrm{d}x \textrm{d}t
		\end{equation}
		is satisfied for all $\varphi \in L^2((0,T) \to X_0^s(\Omega))$.
	\end{definition}
	\begin{definition}\label{def:subsuper}
		A function $u$ is called a finite energy subsolution of \eqref{eq:P} if $u \in L^2((0,T) \to X_0^s(\Omega))$, $u_t \in L^2((0,T) \to (X_0^s(\Omega))^*)$, $\|u(t,\cdot)\|_{L^2(\Omega)} \to 0$ as $t \to 0+$,    
		and \eqref{eq:weak} 
		is satisfied with ``$\leq$'' in place of ``$=$''  for all nonnegative $\varphi \in L^2((0,T) \to X_0^s(\Omega))$.
	\end{definition}
	\begin{definition}\label{def:super}
		A function $u$ is called a finite energy supersolution of \eqref{eq:P} if $u \in L^2((0,T) \to X_0^s(\Omega))$, $u_t \in L^2((0,T) \to (X_0^s(\Omega))^*)$, $\|u(t,\cdot) - u_0(\cdot)\|_{L^2(\Omega)} \to 0$ as $t \to 0+$, where $u_0 \in L^2(\Omega)$ and $u_0 \geq 0$ a.e.\ in $\Omega$, 
		and \eqref{eq:weak} 
		is satisfied with ``$\geq$'' in place of ``$=$'' 
		for all nonnegative $\varphi \in L^2((0,T) \to X_0^s(\Omega))$.
	\end{definition}
	
	In the full generality, sub- and supersolutions do not have to be zero in $(0,T)\times (\mathbb{R}^N \setminus \Omega)$, and subsolutions do not need to satisfy $\|u(t,\cdot)\|_{L^2(\Omega)} \to 0$ as $t \to 0+$.
	However, our more restrictive Definitions~\ref{def:subsuper} and~\ref{def:super} are sufficient for the purposes of the present paper. 
	
	Let us also note that for $\alpha \in (0,1]$ 
	the right-hand side of \eqref{eq:weak} is well-defined for any $u, \varphi \in L^2((0,T) \to X_0^s(\Omega))$. 
	Indeed, we have $u, \varphi \in L^2((0,T)\times \Omega)$, and in view of \ref{Q1} and the boundedness of $\Omega$, we get
	\begin{equation}\label{eq:weak:est1}
		\left|\int_0^T \int_\Omega q(x) |u|^{\alpha-1} u \varphi \,\textrm{d}x \textrm{d}t\right| 
		\leq
		C \int_0^T \int_\Omega (1+|u|) |\varphi| \,\textrm{d}x \textrm{d}t < +\infty.
	\end{equation}

	\section{Proof of Theorem~\ref{thm:1}}\label{sec:proof}
	\subsection{Construction of a subsolution}\label{sec:subsol}
	As usual, we denote by $B_R(x_0)$ an open ball in $\mathbb{R}^N$ of radius $R>0$ centered at $x_0 \in \mathbb{R}^N$.
	For given $p>-1$, $R>0$, and $x_0 \in \mathbb{R}^N$, we define a function $\psi_{x_0,R}: \mathbb{R}^N \to [0,+\infty)$ as
	\begin{equation}\label{eq:subsol-1}
		\psi_{x_0,R}(x) = (R^2-|x-x_0|^2)_+^p,
	\end{equation}
	where the subindex ``+'' denotes the positive part of the function. 
	In particular, we have $\text{supp}\,\psi_{x_0,R} = B_R(x_0)$.
	It is known from \cite[Theorem 1, Eq.~(1.5)]{dyda1} (see also \cite[Corollary~3 (i)]{DKK}) that
	\begin{equation}\label{eq:dsphi}
		(-\Delta)^s\psi_{0,1}(x)
		=
		\kappa\, 
		{}_2F_1
		\left(
		s+\frac{N}{2}, -p+s; \frac{N}{2}; |x|^2
		\right),
		\quad |x|<1,
	\end{equation}
	where $\kappa:=-
	\frac{c_{N,s} B(-s,p+1) \pi^{N/2}}{\Gamma(N/2)} > 0$,  
	$c_{N,s}$ is given by \eqref{eq:cns},
	${}_2F_1$ is the Gauss hypergeometric function whose third argument, $N/2$, is positive and hence the right-hand side of \eqref{eq:dsphi} is well defined (see, e.g., \cite[Section~15.1]{abram}), and $B,\Gamma$ are the  beta and gamma functions, respectively (see, e.g., \cite[Section~6]{abram}).
	We also consider a function $\theta: [0,+\infty) \to [0,+\infty)$ which is a solution of the Cauchy problem
	\begin{equation}\label{eq:subsol-2}
		\left\{
		\begin{aligned}
			\frac{\mathrm{d} \theta}{\mathrm{d} t}(t) &= \frac{q_0R^{\alpha-1}}{2} \theta^\alpha(t) &&\text{for}~ t >0,\\
			\theta(t) &>0 &&\text{for}~ t >0,\\ 
			\theta(0) &= 0.
		\end{aligned}
		\right.
	\end{equation}
	Evidently, $\theta$ is increasing and can be expressed in the form
	\begin{equation}\label{eq:subsol0}
		\theta(t) = \frac{1}{R}\left(\frac{(1-\alpha) q_0}{2} \,t\right)^\frac{1}{1-\alpha}, \quad t \geq 0.
	\end{equation}
	Let us now define a function $\underline{w}: [0,T] \times \mathbb{R}^N \to [0,+\infty)$ as
	\begin{equation}\label{eq:subsol1}
		\underline{w}(t,x)
		=
		\theta(t) \psi_{x_0,R}(x).
	\end{equation}
	Our aim is to show that $\underline{w}$ is a finite energy subsolution of \eqref{eq:P} for a sufficiently small $T>0$.
	
	We start with the following auxiliary information regarding $(-\Delta)^s \psi_{x_0,R}$, cf.\ \cite[Lemma~9]{dyda1}.
	\begin{lemma}\label{lem:Psi}
		Let $p>2s$. 
		Then the following assertions are satisfied:
		\begin{enumerate}[label={\rm(\roman*)}]
			\item\label{lem:Psi:2} $(-\Delta)^s \psi_{x_0,R} \in C(B_R(x_0)) \cap L^\infty(B_R(x_0))$.
			\item\label{lem:Psi:1} $(-\Delta)^s \psi_{x_0,R} \in L^2(\mathbb{R}^N)$.
			\item\label{lem:Psi:5} 
			For all $v \in H^s(\mathbb{R}^N)$, we have
			\begin{equation}\label{eq:kwasnic00}
				\mathcal{E}(\psi_{x_0,R}, v)
				=
				\int_\Omega
				(-\Delta)^s\psi_{x_0,R}(x) v(x) \, \textrm{d}x.
			\end{equation}
			\item\label{lem:Psi:3} $(-\Delta)^s \psi_{x_0,R} \leq 0$ in $\mathbb{R}^N \setminus B_R(x_0)$.
			\item\label{lem:Psi:4} There exists a constant $C \geq \kappa R^{-2s} > 0$ such that
			\begin{equation}\label{eq:frac1}
				\frac{(-\Delta)^s\psi_{x_0,R}(x)}{\psi_{x_0,R}(x)} \leq C \quad \text{for all}~ x \in B_R(x_0).
			\end{equation}
		\end{enumerate}
	\end{lemma}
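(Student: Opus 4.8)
The plan is to first reduce all five assertions to the model case $x_0=0$, $R=1$ by translation and scaling. With $y=(x-x_0)/R$ one has $\psi_{x_0,R}(x)=R^{2p}\psi_{0,1}(y)$, and since the operator in \eqref{eq:deltas} obeys the scaling $(-\Delta)^s\big[\psi_{0,1}(\tfrac{\cdot-x_0}{R})\big](x)=R^{-2s}\big((-\Delta)^s\psi_{0,1}\big)(y)$, we get
\begin{equation*}
(-\Delta)^s\psi_{x_0,R}(x)=R^{2p-2s}\big((-\Delta)^s\psi_{0,1}\big)\!\left(\tfrac{x-x_0}{R}\right).
\end{equation*}
Each claim for $\psi_{x_0,R}$ then follows from the corresponding claim for $\psi_{0,1}$ by tracking the powers of $R$.

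For \ref{lem:Psi:2} I would read off the explicit representation \eqref{eq:dsphi}. Writing $a=s+\tfrac N2$, $b=s-p$, $c=\tfrac N2$, the map $z\mapsto {}_2F_1(a,b;c;z)$ is continuous on $[0,1)$, which gives continuity of $(-\Delta)^s\psi_{0,1}$ on $B_1(0)$. The only place where $p>2s$ is truly needed is the endpoint $z\to1^-$: here $c-a-b=p-2s>0$, so Gauss's summation theorem provides the finite limit ${}_2F_1(a,b;c;1)=\frac{\Gamma(\tfrac N2)\Gamma(p-2s)}{\Gamma(-s)\Gamma(\tfrac N2+p-s)}$, and the hypergeometric factor extends continuously and boundedly to $[0,1]$, proving \ref{lem:Psi:2}. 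I would also record that this limit is strictly \emph{negative}, since $\Gamma(-s)<0$ for $s\in(0,1)$ while the other Gamma values are positive; this sign is exactly what makes \ref{lem:Psi:4} work. Assertion \ref{lem:Psi:3} is then immediate: for $x\notin B_R(x_0)$ the function $\psi_{x_0,R}$ vanishes at and (off the sphere) near $x$, so \eqref{eq:deltas} collapses to $(-\Delta)^s\psi_{x_0,R}(x)=-c_{N,s}\int_{B_R(x_0)}\psi_{x_0,R}(y)|x-y|^{-N-2s}\,\mathrm{d}y\le0$, the integral converging at boundary points thanks to $p>2s$.

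For \ref{lem:Psi:4} I form, in the model case, the quotient
\begin{equation*}
\frac{(-\Delta)^s\psi_{0,1}(x)}{\psi_{0,1}(x)}=\frac{\kappa\,{}_2F_1\!\big(a,b;c;|x|^2\big)}{(1-|x|^2)^p},\qquad |x|<1.
\end{equation*}
As $|x|\to1^-$ the numerator tends to $\kappa$ times the negative endpoint value above while the denominator tends to $0^+$, so the quotient tends to $-\infty$; being continuous on $[0,1)$ and eventually negative, it is bounded from above by some finite $C_0$, and its value at the center is $\kappa$, so $C_0\ge\kappa$. Undoing the scaling multiplies the quotient by $R^{-2s}$ and yields \eqref{eq:frac1} with $C=R^{-2s}C_0\ge\kappa R^{-2s}$.

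Assertion \ref{lem:Psi:1} is obtained by combining the interior bound from \ref{lem:Psi:2} with exterior estimates. On $B_R(x_0)$ the function is bounded and the ball has finite measure. Outside, the representation in \ref{lem:Psi:3} gives $|(-\Delta)^s\psi_{x_0,R}(x)|\le c_{N,s}\,\|\psi_{x_0,R}\|_{L^1(\mathbb{R}^N)}\,\mathrm{dist}(x,B_R(x_0))^{-N-2s}$, hence decay of order $|x|^{-N-2s}$ whose square is integrable at infinity because $2(N+2s)>N$; near the sphere the same integral stays bounded precisely because $\psi_{x_0,R}$ vanishes there to order $p>2s$. Splitting $\mathbb{R}^N$ into $B_{2R}(x_0)$ and its complement gives \ref{lem:Psi:1}. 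Finally \ref{lem:Psi:5} follows by fractional integration by parts: $\psi_{x_0,R}\in\X$ (it lies in $H^s(\mathbb{R}^N)$ and is supported in $B_R(x_0)\subset\Omega$) and $(-\Delta)^s\psi_{x_0,R}\in L^2(\mathbb{R}^N)$ by \ref{lem:Psi:1}, so the realization equivalence cited after \eqref{eq:e=inner} gives $J\psi_{x_0,R}=(-\Delta)^s\psi_{x_0,R}$, and \eqref{eq:e=inner} together with \eqref{eq:prod2} yields \eqref{eq:kwasnic00}. The main obstacle throughout is the boundary analysis: extracting the finite, negative limit of the hypergeometric function at $z=1$ and controlling the near-boundary integral for $(-\Delta)^s\psi_{x_0,R}$ as $x\to\partial B_R(x_0)$---both hinge on the gap $p-2s>0$, and it is the negativity of the endpoint value that rescues \ref{lem:Psi:4}, where the denominator degenerates.
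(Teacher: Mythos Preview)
Your proposal is correct and follows essentially the same route as the paper: reduction to the unit ball by translation and dilation, the explicit hypergeometric formula \eqref{eq:dsphi} together with Gauss's endpoint value (and the sign $\Gamma(-s)<0$) for \ref{lem:Psi:2} and \ref{lem:Psi:4}, the direct integral representation for \ref{lem:Psi:3}, the interior/exterior split with the $p>2s$ boundary gain for \ref{lem:Psi:1}, and the Kwa\'snicki realization equivalence for \ref{lem:Psi:5}. The only cosmetic difference is that the paper phrases \ref{lem:Psi:4} by first isolating a shell $\{\tau<|x|<1\}$ where $(-\Delta)^s\psi_{0,1}<0$ and then taking the maximum of the continuous quotient on $\overline{B_\tau}$, which is exactly your ``continuous on $[0,1)$, tends to $-\infty$, hence bounded above'' argument unpacked.
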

	\begin{proof}
		Thanks to the translation and dilation properties of the fractional Laplacian (cf.\ \cite[Lemma 2.6]{garofalo}), it is sufficient to assume that $R=1$ and $x_0$ is the origin of $\mathbb{R}^N$. 
		We will denote $B_r(0)$ as $B_r$, for brevity.
		
		\ref{lem:Psi:2}
		We use the formula \eqref{eq:dsphi}.
		It is known that the hypergeometric series defining the function ${}_2F_1
		\left(
		s+\frac{N}{2}, -p+s; \frac{N}{2}; \tau
		\right)$ converges absolutely for any $\tau \in [0,1]$ since $p>2s$, see, e.g., \cite[(15.1.1)]{abram}.
		Then, Abel's theorem guarantees that   ${}_2F_1
		\left(
		s+\frac{N}{2}, -p+s; \frac{N}{2}; \cdot
		\right) \in C([0,1])$. 
		This implies that $(-\Delta)^s \psi_{0,1} \in C(B_1) \cap L^\infty(B_1)$.
		
		\ref{lem:Psi:1}
		We start by showing that $(-\Delta)^s \psi_{0,1} \in L^2(\mathbb{R}^N\setminus B_1)$.
		For any $x \in \mathbb{R}^N\setminus B_1$, we use \eqref{eq:deltas} to get
		\begin{align}
			\notag    
			(-\Delta)^s \psi_{0,1}(x)
			&=
			-c_{N,s}
			\lim_{\varepsilon \to 0+} \int_{\mathbb{R}^N \setminus B_\varepsilon(x)} \frac{\psi_{0,1}(y)}{|y-x|^{N+2s}} \, \textrm{d}y
			\\
			&=
			-c_{N,s}
			\lim_{\varepsilon \to 0+} \int_{B_1 \setminus B_\varepsilon(x)} \frac{(1-|y|^2)^p}{|y-x|^{N+2s}} \, \textrm{d}y
			\leq
			0,
			\label{eq:delta-psi}
		\end{align}
		where $c_{N,s}$ is given by \eqref{eq:cns}.
		First, we discuss the case $|x| = R \geq 2$. Thanks to the estimates $|x-y| \geq |x|-|y| \geq R - 1 \geq 1$ and $(1-|y|^2)^p \leq 1$ for $y \in B_1$, we deduce from \eqref{eq:delta-psi} that
		$$
		|(-\Delta)^s \psi_{0,1}(x)| \leq 
		c_{N,s}
		\lim_{\varepsilon \to 0+} \int_{B_1 \setminus B_\varepsilon(x)} \frac{1}{(R-1)^{N+2s}} \, \textrm{d}y
		=
		\frac{c_{N,s} |B_1|}{(R-1)^{N+2s}}.
		$$
		Passing to the spherical coordinates and noting that $R-1 \geq R/2$ for any $R \geq 2$, we arrive at
		\begin{align*}
			\int_{\mathbb{R}^N \setminus B_2}|(-\Delta)^s \psi_{0,1}(x)|^2 \,\textrm{d}x 
			&\leq 
			(c_{N,s} |B_1|)^2 |\partial B_1|
			\int_2^{+\infty} \frac{R^{N-1}}{(R-1)^{2(N+2s)}} \, \textrm{d}R 
			\\
			&\leq
			C \int_2^{+\infty} \frac{1}{R^{N+4s+1}} \, \textrm{d}R 
			< 
			+\infty,
		\end{align*}
		where $C>0$ is a constant.
		That is, $(-\Delta)^s \psi_{0,1} \in L^2(\mathbb{R}^N \setminus B_2)$.
		
		When $|x| \in [1,2)$, we write $1-|y|^2 = (1+|y|)(1-|y|) \leq 2(|x|-|y|) \leq 2|x-y|$ 
		for $y \in B_1$ and obtain from \eqref{eq:delta-psi} that
		\begin{align*}
			|(-\Delta)^s \psi_{0,1}(x)|
			&\leq 
			2^p c_{N,s}
			\lim_{\varepsilon \to 0+} \int_{B_1 \setminus B_\varepsilon(x)} \frac{|x-y|^p}{|x-y|^{N+2s}} \, \textrm{d}y
			\leq
			2^p c_{N,s}
			\int_{B_3(x)} \frac{1}{|x-y|^{N+2s-p}} \, \textrm{d}y\\
			&= 
			2^p c_{N,s} |\partial B_1|
			\int_0^{3} \frac{R^{N-1}}{R^{N+2s-p}} \,\textrm{d}R
			=
			2^p c_{N,s} |\partial B_1|
			\int_0^{3} \frac{1}{R^{2s+1-p}} \,\textrm{d}R < +\infty,
		\end{align*}
		where the finiteness of the last integral follows from the assumption $p>2s$.
		In particular, we have $(-\Delta)^s \psi_{0,1} \in L^\infty(B_2 \setminus B_1)$, and, consequently, $(-\Delta)^s \psi_{0,1} \in L^2(\mathbb{R}^N \setminus B_1)$.
		On the other hand, the assertion \ref{lem:Psi:2} gives $(-\Delta)^s \psi_{0,1} \in L^\infty(B_1)$, which finally yields $(-\Delta)^s \psi_{0,1} \in L^2(\mathbb{R}^N)$.
		
		\ref{lem:Psi:5} 
		The assertion follows from  \cite[Theorem~5.3]{kwasnic} (the equivalence of $\psi_{0,1} \in \mathscr{D}(L_I,L^2(\mathbb{R}^N))$ and $\psi_{0,1} \in \mathscr{D}(L_Q,L^2(\mathbb{R}^N))$), since $\psi_{0,1} \in L^2(\mathbb{R}^N)$ and, by the assertion \ref{lem:Psi:1}, $(-\Delta)^s \psi_{0,1} \in L^2(\mathbb{R}^N)$.
		Here, we have $\psi_{0,1} \in L^2(\mathbb{R}^N)$ since $\psi_{0,1}$ is a continuous function with compact support, see \eqref{eq:subsol-1}.
		
		\ref{lem:Psi:3} 
		The assertion is given by \eqref{eq:delta-psi}.
		
		\ref{lem:Psi:4} 
		Since $p > 2s$ and $N/2 > 0$, the formula \cite[(15.1.20)]{abram} gives
		\begin{align*}
			{}_2F_1\left(s+\frac{N}{2}, -p+s; \frac{N}{2}; 1\right)
			= 
			\frac{\Gamma\left(\frac{N}{2}\right)\Gamma\left(p-2s\right)}
			{\Gamma\left(-s\right)\Gamma\left(p-s+\frac{N}{2}\right)}
			<0,
		\end{align*}
		where the last inequality follows from the fact that $\Gamma\left(-s\right)<0$ for $s \in (0,1)$.
		Therefore, there exists $\tau \in (0,1)$ such that 
		\begin{equation}\label{eq:deltapsi<0}
			(-\Delta)^s\psi_{0,1}(x)<0
			\quad \text{for any}~ |x| \in (\tau,1).
		\end{equation}
		On the other hand, for any $x \in \overline{B_\tau}$ we have the estimate
		\begin{align}
			\frac{(-\Delta)^s\psi_{0,1}(x)}{\psi_{0,1}(x)} 
			&=
			\kappa
			\frac{
				{}_2F_1\left(
				s+\frac{N}{2}, -p+s; \frac{N}{2}; |x|^2
				\right)}{(1-|x|^2)^p}\nonumber\\
			&\leq
			\kappa \max\limits_{x\in\overline{B_{\tau}}}
			\frac{
				{}_2F_1\left(
				s+\frac{N}{2}, -p+s; \frac{N}{2}; |x|^2
				\right)}{(1-|x|^2)^p}
			=: C
			< +\infty,
			\label{eq:deltapsi2}
		\end{align}
		thanks to the continuity of $x \mapsto \frac{(-\Delta)^s\psi_{0,1}(x)}{\psi_{0,1}(x)}$ in $\overline{B_\tau}$ following from the assertion \ref{lem:Psi:2}.
		Combining now \eqref{eq:deltapsi<0} with \eqref{eq:deltapsi2}, we derive Fitzsimmons' ratio estimate \eqref{eq:frac1} (cf.\ \cite[Lemma~9]{dyda1}).
		Observing that $\frac{(-\Delta)^s\psi_{0,1}(0)}{\psi_{0,1}(0)} = \kappa$, we conclude that $C \geq \kappa > 0$.
	\end{proof}

	Now we are ready to justify that $\underline{w}$ is a finite energy subsolution of \eqref{eq:P}.
	\begin{lemma}
		Let \ref{Q1} and \ref{Q2} be satisfied.
		Let $R>0$ and $x_0 \in \Omega$ be such that $B_R(x_0) \subset O$.
		Then, for any $p > \max\{1,2s\}$, $\underline{w}$ defined by \eqref{eq:subsol1} is a finite energy subsolution of \eqref{eq:P} for a sufficiently small $T>0$.
	\end{lemma}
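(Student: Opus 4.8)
The plan is to verify, in turn, the four structural requirements in Definition~\ref{def:subsuper} for $\underline{w}$ from \eqref{eq:subsol1}, and then to reduce the weak subsolution inequality to a pointwise differential inequality that is settled by the Cauchy problem~\eqref{eq:subsol-2} together with the ratio estimate of Lemma~\ref{lem:Psi}.

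Write $\psi:=\psi_{x_0,R}$. The membership conditions are routine once one knows $\psi\in\X$: the hypothesis $p>1$ makes $\psi$ Lipschitz with compact support in $B_R(x_0)\subset\Omega$, so $\psi\in\X$, and $\|\psi\|_{\X}^2=\mathcal{E}(\psi,\psi)=\int_\Omega(-\Delta)^s\psi\,\psi\,\textrm{d}x<+\infty$ by Lemma~\ref{lem:Psi}\ref{lem:Psi:5} and~\ref{lem:Psi:1}. Since $\underline{w}(t,\cdot)=\theta(t)\psi$ with $\theta\in C^1([0,\infty))$ bounded on $[0,T]$ (note $1/(1-\alpha)>1$, so $\theta'(0)=0$), we get $\underline{w}\in L^2((0,T)\to\X)$ from $\int_0^T\theta(t)^2\,\textrm{d}t<+\infty$. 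For the derivative, $\underline{w}_t(t,\cdot)=\theta'(t)\psi$ with $\theta'(t)=\tfrac{q_0R^{\alpha-1}}{2}\theta^\alpha(t)\sim t^{\alpha/(1-\alpha)}$; as $2\alpha/(1-\alpha)>-1$, we have $\theta'\in L^2(0,T)$, and because $\psi\in\X\hookrightarrow L^2(\Omega)\hookrightarrow(\X)^*$ this yields $\underline{w}_t\in L^2((0,T)\to(\X)^*)$, the genuine distributional derivative as $\theta\in C^1$. Finally $\|\underline{w}(t,\cdot)\|_{L^2(\Omega)}=\theta(t)\|\psi\|_{L^2(\Omega)}\to0$ as $t\to0+$ since $\theta(0)=0$.

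To obtain the weak inequality~\eqref{eq:weak} with ``$\leq$'' for every nonnegative $\varphi$, I would rewrite its two left-hand terms using \eqref{eq:prod2} and the identity $\mathcal{E}(\underline{w}(t,\cdot),\varphi(t,\cdot))=\theta(t)\int_\Omega(-\Delta)^s\psi\,\varphi(t,\cdot)\,\textrm{d}x$ from Lemma~\ref{lem:Psi}\ref{lem:Psi:5}. Since the resulting integrands lie in $L^2((0,T)\times\Omega)$ (here Lemma~\ref{lem:Psi}\ref{lem:Psi:1} is used) and $\varphi\ge0$ is arbitrary, the weak inequality is equivalent to the pointwise bound
\[
\theta'(t)\,\psi(x)+\theta(t)\,(-\Delta)^s\psi(x)\le q(x)\,\theta^\alpha(t)\,\psi^\alpha(x)\qquad\text{for a.e.\ }(t,x)\in(0,T)\times\Omega.
\]
On $\Omega\setminus B_R(x_0)$ this holds trivially: there $\psi=\psi^\alpha=0$ and $(-\Delta)^s\psi\le0$ by Lemma~\ref{lem:Psi}\ref{lem:Psi:3}, so the left-hand side is nonpositive.

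The crux, and the step I expect to be the main obstacle, is the same inequality on $B_R(x_0)$, where $\psi>0$. Dividing by $\psi$, bounding $(-\Delta)^s\psi/\psi\le C$ by the Fitzsimmons estimate~\eqref{eq:frac1}, and using $q\ge q_0$ on $B_R(x_0)\subset O$ together with $\psi^{\alpha-1}\ge R^{-2p(1-\alpha)}$ (its minimum, at the centre), it suffices to arrange
\[
\theta'(t)+C\,\theta(t)\le q_0\,R^{-2p(1-\alpha)}\,\theta^\alpha(t).
\]
Feeding in $\theta'=\tfrac{q_0R^{\alpha-1}}{2}\theta^\alpha$ reduces this to $C\,\theta^{1-\alpha}(t)\le q_0\bigl(R^{-2p(1-\alpha)}-\tfrac12R^{\alpha-1}\bigr)$, and since $\theta^{1-\alpha}(t)=\tfrac{(1-\alpha)q_0}{2R^{1-\alpha}}\,t$ vanishes as $t\to0+$, the estimate closes on $(0,T)$ for $T$ small once the right-hand constant is positive. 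The delicate bookkeeping is the $R$-scaling: the ratio constant in~\eqref{eq:frac1} is of order $R^{-2s}$, and positivity of the bracket is exactly the smallness condition $R^{(1-\alpha)(2p-1)}<2$ (e.g.\ $R\le1$). As $B_{R'}(x_0)\subset O$ for every $R'\le R$, one is free to shrink $R$ to secure this, after which the smallness of $T$ absorbs the lower-order diffusion term and $\underline{w}$ is a finite energy subsolution.
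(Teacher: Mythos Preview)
Your proof is correct and follows essentially the same route as the paper: verify the membership conditions for $\underline{w}$, convert the weak inequality to a pointwise one via Lemma~\ref{lem:Psi}\ref{lem:Psi:5}, split $\Omega$ into $B_R(x_0)$ and its complement, and close using the ratio estimate~\eqref{eq:frac1} together with the ODE~\eqref{eq:subsol-2} for $\theta$. Your $R$-bookkeeping is in fact more careful than the paper's: the paper asserts $\psi_{x_0,R}\le R$ on $B_R(x_0)$ (hence $\psi^{\alpha-1}\ge R^{\alpha-1}$), which is only valid when $R\le1$ since $\psi_{x_0,R}(x_0)=R^{2p}$; your bound $\psi^{\alpha-1}\ge R^{-2p(1-\alpha)}$ is the correct one, and your remark that one may shrink $R$ to force $R^{(1-\alpha)(2p-1)}<2$ is the honest way to close the estimate---a harmless adjustment since only some small $R$ is needed for Theorem~\ref{thm:1}.
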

	\begin{proof}
		Let us show that $\underline{w}$ satisfies the assumptions of Definition~\ref{def:subsuper}
		for a sufficiently small $T>0$.
		First, observe that \eqref{eq:subsol-1}, \eqref{eq:subsol0}, and \eqref{eq:subsol1} yield $\underline{w}(t,x) \to 0$ as $t \to 0+$ uniformly with respect to $x \in \mathbb{R}^N$, and hence
		$\|\underline{w}(t,\cdot)\|_{L^2(\Omega)} \to 0$ as $t \to 0+$.
		Second, we show that $\underline{w} \in L^2((0,T) \to X_0^s(\Omega))$ for any $T>0$.
		Since
		$$
		\int_0^T \|\underline{w}(t,\cdot)\|_{X_0^s(\Omega)}^2 \,\textrm{d}t 
		=
		\int_0^T \|\theta(t) \psi_{x_0,R}(\cdot)\|_{X_0^s(\Omega)}^2 \,\textrm{d}t 
		=
		\|\psi_{x_0,R}\|_{X_0^s(\Omega)}^2 \int_0^T \theta^2(t) \,\textrm{d}t
		$$
		and $\theta$ is bounded in $(0,T)$, 
		it is sufficient to verify that $\|\psi_{x_0,R}\|_{X_0^s(\Omega)}^2 < +\infty$.
		Recalling that $p>1$, we have $\psi_{x_0,R} \in C_0^1(\Omega)$. 
		Hence, arguing as in the proof of  \cite[Lemma~5.1]{Serv3}, we deduce that $\psi_{x_0,R} \in H^s(\mathbb{R}^N)$ and, consequently, $\|\psi_{x_0,R}\|_{X_0^s(\Omega)}$ is finite.
		Moreover, we have $\underline{w}_t \in L^2((0,T) \to (X_0^s(\Omega))^*)$ for any $T>0$, 
		and	$\underline{w}_t(t,x) = \theta'(t) \psi_{x_0,R}(x)$, see, e.g., \cite[Example~23.21]{zeid}.
		
		Our final goal is to prove the existence of $T>0$ such that
		\begin{equation}\label{eq:weak-sub1}
			\int_0^T \langle \underline{w}_t(t,\cdot),\varphi(t,\cdot) \rangle \, \textrm{d}t
			+
			\int_0^T \mathcal{E}(\underline{w}(t,\cdot), \varphi(t,\cdot)) \, \textrm{d}t
			\leq 
			\int_0^T \int_\Omega q(x) \underline{w}^{\alpha}(t,x) \varphi(t,x) \,\textrm{d}x \textrm{d}t
		\end{equation}
		for all nonnegative functions $\varphi \in L^2((0,T) \to X_0^s(\Omega))$. 
		This will be achieved by showing that  \eqref{eq:weak-sub1} is equivalent to
		\begin{equation}\label{eq:weak-sub11}
			\int_0^T \int_\Omega
			\left[ 
			\theta'(t) \psi_{x_0,R}(x) + \theta(t) (-\Delta)^s \psi_{x_0,R}(x) - q(x) \theta^\alpha(t) \psi_{x_0,R}^\alpha(x)
			\right]
			\varphi(t,x) \,\textrm{d}x\textrm{d}t \leq 0,
		\end{equation}
		where
		\begin{equation}\label{eq:weak-sub3}
			\theta'(t) \psi_{x_0,R}(x) + \theta(t) (-\Delta)^s \psi_{x_0,R}(x) - q(x) \theta^\alpha(t) \psi_{x_0,R}^\alpha(x) \leq 0
			\quad \text{in}~ [0,T) \times \Omega,
		\end{equation}
		with a sufficiently small $T>0$.
		First, we observe that \eqref{eq:weak-sub1} is 
		equivalent to 
		\begin{align}
			\int_0^T 
			\theta'(t)
			\langle  \psi_{x_0,R}(\cdot),\varphi(t,\cdot) \rangle \, \textrm{d}t
			&+
			\int_0^T 
			\theta(t) \mathcal{E}(\psi_{x_0,R}(\cdot), \varphi(t,\cdot)) \, \textrm{d}t
			\\
			\label{eq:weak-sub2}
			&\leq 
			\int_0^T \int_\Omega q(x) \theta^{\alpha}(t)
			\psi_{x_0,R}^\alpha(x)
			\varphi(t,x) \,\textrm{d}x \textrm{d}t.
		\end{align}
		Using now \eqref{eq:prod2}, we have
		\begin{equation}\label{eq:kwasnic0}
			\int_0^T 
			\theta'(t)
			\langle  \psi_{x_0,R}(\cdot),\varphi(t,\cdot) \rangle \, \textrm{d}t
			=
			\int_0^T 
			\theta'(t)
			\int_\Omega \psi_{x_0,R}(x) \varphi(t,x) \, \textrm{d}x\textrm{d}t.
		\end{equation}
		Moreover, by Lemma~\ref{lem:Psi}~\ref{lem:Psi:5},
		\begin{equation}\label{eq:kwasnic}
			\int_0^T 
			\theta(t) \mathcal{E}(\psi_{x_0,R}(\cdot), \varphi(t,\cdot)) \, \textrm{d}t
			=
			\int_0^T 
			\int_\Omega
			\theta(t)
			(-\Delta)^s\psi_{x_0,R}(x) \varphi(t,x) \, \textrm{d}x\textrm{d}t.
		\end{equation}
		Therefore, we deduce from \eqref{eq:kwasnic0} and \eqref{eq:kwasnic} that the inequalities \eqref{eq:weak-sub1} and \eqref{eq:weak-sub11} are equivalent. 
		
		It remains to prove that \eqref{eq:weak-sub3} is satisfied for a sufficiently small $T>0$.
		Note that
		\begin{equation}\label{eq:0psi1}
			0 < \psi_{x_0,R} \leq R \quad \text{in}~ B_R(x_0).
		\end{equation}
		Since $\theta \in C^1([0,+\infty))$ and $\theta(0)=0$, we can find $T>0$ such that $\theta^{1-\alpha}(t) \leq q_0 R^{\alpha-1}/(2C)$ for all $t \in [0,T)$, where $C>0$ is given by Lemma~\ref{lem:Psi}~\ref{lem:Psi:4}.
		In particular, we have
		\begin{equation}\label{eq:q012}
			\frac{q_0 R^{\alpha-1}}{2} + C \theta^{1-\alpha}(t) \leq q_0 R^{\alpha-1}
			\quad \text{for all}~ t \in (0,T).
		\end{equation}
		Using the equation for $\theta$ from \eqref{eq:subsol-2} and Lemma~\ref{lem:Psi}~\ref{lem:Psi:4}, 
		we estimate the left-hand side of \eqref{eq:weak-sub3} in $[0,T) \times B_R(x_0)$ as follows:
		\begin{align}
			\notag
			&\theta'(t) \psi_{x_0,R}(x) + \theta(t) (-\Delta)^s \psi_{x_0,R}(x) - q(x) \theta^\alpha(t)
			\psi_{x_0,R}^\alpha(x)
			\\
			\notag
			&\leq
			\frac{q_0R^{\alpha-1}}{2}\theta^\alpha(t) \psi_{x_0,R}(x) + C \theta(t)  \psi_{x_0,R}(x) - q(x) \theta^\alpha(t) \psi_{x_0,R}^\alpha(x)
			\\
			\label{eq:subsol:low1}
			&=
			\theta^\alpha(t) \psi_{x_0,R}(x)
			\left(
			\frac{q_0R^{\alpha-1}}{2}  + C \theta^{1-\alpha}(t)  - q(x) \psi_{x_0,R}^{\alpha-1}(x)
			\right).
		\end{align}
		In view of \eqref{eq:0psi1}, we have $\psi_{x_0,R}^{\alpha-1} \geq R^{\alpha-1}$ in $B_R(x_0)$, and hence, using \eqref{eq:q012} and \ref{Q1}, \ref{Q2}, we obtain from \eqref{eq:subsol:low1} that
		\begin{align*}
			\theta'(t) \psi_{x_0,R}(x) 
			&+ \theta(t) (-\Delta)^s \psi_{x_0,R}(x) - q(x) \theta^\alpha(t)
			\psi_{x_0,R}^\alpha(x)
			\\
			&\leq
			\theta^\alpha(t) \psi_{x_0,R}(x) R^{\alpha-1}
			\left(q_0  - q(x)\right) \leq 0
		\end{align*}
		for any $t \in [0,T)$ and $x \in B_R(x_0)$.
		On the other hand, if $t \in [0,T)$ and $x \in \Omega \setminus B_R(x_0)$, we get from Lemma~\ref{lem:Psi}~\ref{lem:Psi:3} that
		\begin{align*}
			&\theta'(t) \psi_{x_0,R}(x) + \theta(t) (-\Delta)^s \psi_{x_0,R}(x) - q(x) \theta^\alpha(t)
			\psi_{x_0,R}^\alpha(x) 
			=
			\theta(t)
			(-\Delta)^s \psi_{x_0,R}(x)
			\leq 0.
		\end{align*}
		Therefore, we proved the inequality \eqref{eq:weak-sub3}, and hence \eqref{eq:weak-sub1} is satisfied. 
	\end{proof}

	\subsection{Construction of a supersolution}\label{sec:supresol}
	
	Let us consider the fractional weighted Lane-Emden problem
	\begin{equation}\label{eq:D}
		\left\{
		\begin{aligned}
			(-\Delta)^s {u} &= q(x) |{u}|^{\alpha-1} {u}
			&&\text{in}~ \Omega,\\
			{u} &= 0 &&\text{in}~  \mathbb{R}^N \setminus  \Omega.
		\end{aligned}
		\right.
	\end{equation}
	\begin{definition}\label{def:sol-ellip}
		A function $u$ is called a finite energy solution of \eqref{eq:D} if $u \in X_0^s(\Omega)$
		and the equality
		\begin{equation}\label{eq:weak-D}
			\mathcal{E}(u, \varphi)
			=
			\int_\Omega q(x) |u(x)|^{\alpha-1} u(x) \varphi(x) \,\textrm{d}x
		\end{equation}
		is satisfied for all $\varphi \in X_0^s(\Omega)$.
	\end{definition}
	
	Using variational methods, it is not hard to show that \eqref{eq:D} possesses a nontrivial nonnegative finite energy solution. 
	Indeed, consider the energy functional $E$ associated with \eqref{eq:D}:
	$$
	E(u) 
	= 
	\frac{1}{2} \|u\|_{\X}^2
	-
	\frac{1}{\alpha+1} \int_\Omega q(x) |u|^{\alpha+1} \,\textrm{d}x,
	\quad u \in X_0^s(\Omega).
	$$
	Since $q \in L^\infty(\Omega)$ and $\alpha \in (0,1)$, the H\"older inequality and Remark~\ref{rem:reg} yield
	$$
	\int_\Omega q(x) |u|^{\alpha+1} \,\textrm{d}x
	\leq 
	C\|u\|_{\X}^{\alpha+1},
	\quad u \in X_0^s(\Omega).
	$$
	Thus, we see that $E$ is coercive, i.e., $E(u) \to +\infty$ as $\|u\|_{X_0^s(\Omega)} \to +\infty$. 
	Note also that $E$ is weakly lower semicontinuous, as it also follows from Remark~\ref{rem:reg}. 
	Therefore, by the direct method of the calculus of variations, we get the existence a global minimizer $\hat{u}$ of $E$, which is a finite energy solution of \eqref{eq:D}.
	Taking any $u \in X_0^s(\Omega)$ such that $\int_\Omega q(x) |u|^{\alpha+1} \,\textrm{d}x > 0$, we observe that $E(tu)<0$ for any sufficiently small $t>0$. 
	Consequently, the global minimizer $\hat{u}$ is nonzero in $\Omega$.
	In view of the evenness of $E$, $\hat{u}$ can be assumed nonnegative, and hence the strong maximum principle (see, e.g., \cite[Proposition~A.2]{FL}) guarantees that $\hat{u}>0$ a.e.\ in $\Omega$.
	Note that \cite[Proposition~A.2]{FL} is applicable since the space $\mathcal{D}_0^{s,2}(\Omega)$ coincides with $\X$ under our assumptions on $\Omega$, see \cite[Remark~2.1]{FL}.
	In general, we refer to \cite{FL} for a thorough discussion on the fractional Lane-Emden problem \eqref{eq:D} in the unweighted case $q = 1$ a.e\ in $\Omega$. 
	Moreover, we have $\hat{u} \in L^\infty(\Omega)$, see, e.g., \cite[Lemma~1.1]{CGH2} or \cite[Proposition~3.1]{FL}.
	
	The positive global minimizer $\hat{u}$ of $E$ generates a stationary bounded finite energy solution (and hence supersolution) of \eqref{eq:P}. 
	Indeed, considering $\bar{u}(t,\cdot)=\hat{u}(\cdot)$ for all $t \in [0,T]$, we see that $\bar{u}_t = 0$ in $L^2((0,T) \to (X_0^s(\Omega))^*)$ (cf.\ \cite[Example~23.21]{zeid}), and all other assumptions of Definition~\ref{def:super} can be derived straightforwardly.

	\subsection{Existence of a solution}\label{sec:existence}
	
	In this section, we prove the existence of a finite energy solution of \eqref{eq:P} in between the subsolution $\underline{u}$ and the supersolution $\bar{u}$ constructed in the previous subsections. 
	The argument is based on the rather classical monotone iteration method. For the sake of clarity, we provide details. 
	
	Set $u^0 := \underline{u}$.
	For any $n \in \mathbb{N}$, let $u^{n}$ be a unique finite energy solution of the following problem:
	\begin{equation}\label{eq:un}
		\left\{
		\begin{aligned}
			u^{n}_t
			+
			(-\Delta)^s u^{n} 
			&= 
			q(x) |u^{n-1}(t,x)|^{\alpha-1}u^{n-1}(t,x)
			&&\text{in}~ (0,T) \times \Omega,\\
			u^{n} &= 0 &&\text{in}~  (0,T) \times (\mathbb{R}^N \setminus  \Omega),\\
			u^{n} &= 0 &&\text{at}~ t=0.
		\end{aligned}
		\right.
	\end{equation}
	We recall from Section~\ref{sec:preliminaries} that the existence and uniqueness of $u^{n}$ is
	obtained, for example, by \cite[Theorem~26]{LPPS}, by noting that for a.e.\ $t \in (0,T)$ the right-hand side of the equation in \eqref{eq:un} can be naturally identified with some $g(t) \in L^2((0,T) \to (X_0^s(\Omega))^*)$ by the equality
	\begin{equation}\label{eq:gxi}
		\langle g(t), \xi \rangle
		=
		\int_\Omega 
		q(x) |u^{n-1}(t,x)|^{\alpha-1}u^{n-1}(t,x) \, \xi(x)
		\,\textrm{d}x
		\quad \text{for all}~ \xi \in \X.
	\end{equation}
	Moreover, as in \eqref{eq:weak-f-eq2}, we have
	$J{u^n}(t) \in (X_0^s(\Omega))^*$, $\|J{u^n}(t)\|_{(X_0^s(\Omega))^*} = \|u^n(t,\cdot)\|_{\X}$, and 
	\begin{equation}\label{eq:weak-f-eq3}
		\langle 
		u^n_t(t)
		+
		J{u^n}(t)
		-
		g(t),
		\xi
		\rangle 
		=
		0
		\quad
		\text{for a.e.}~ t \in (0,T)
		~\text{and all}~
		\xi \in \X.
	\end{equation}

	Using inductive arguments, it is not hard to show that 
	$u^n \geq u^{n-1} \geq 0$ for all $n \geq 1$.
	Indeed, let us denote $w^n = u^n - u^{n-1}$.
	Recalling that $u^0 = \underline{u}$ is a finite energy subsolution of \eqref{eq:P}, we see that $w^1$ satisfies
	\begin{equation*}
		\left\{
		\begin{aligned}
			w^{1}_t
			+
			(-\Delta)^s w^{1} & \geq 0
			&&\text{in}~ (0,T) \times \Omega,\\
			w^{1} &= 0 &&\text{in}~  (0,T) \times (\mathbb{R}^N \setminus  \Omega),\\
			w^{1} &= 0 &&\text{at}~ t=0.
		\end{aligned}
		\right.
	\end{equation*}
	Thus, \cite[Theorem~26]{LPPS} ensures that $w^1 \geq 0$.
	Suppose now that $w^{n} \geq 0$  for some $n \geq 2$ and let us justify that  $w^{n+1} \geq 0$.
	Since, by construction,  $w^{n+1}$ satisfies
	\begin{equation}\label{eq:wn1}
		\left\{
		\begin{aligned}
			w^{n+1}_t
			+
			(-\Delta)^s w^{n+1} &= q(x) 
			\left(
			|u^{n}|^{\alpha-1}u^{n}
			-
			|u^{n-1}|^{\alpha-1}u^{n-1}
			\right)
			&&\text{in}~ (0,T) \times \Omega,\\
			w^{n+1} &= 0 &&\text{in}~  (0,T) \times (\mathbb{R}^N \setminus  \Omega),\\
			w^{n+1} &= 0 &&\text{at}~ t=0,
		\end{aligned}
		\right.
	\end{equation}
	and since $s \to |s|^{\alpha-1}s$ is an increasing function in $\mathbb{R}$ and $q \geq 0$ in $\Omega$ by \ref{Q1}, the right-hand side of the equation in \eqref{eq:wn1} is nonnegative, which implies the desired nonnegativity of $w^{n+1}$. 
	Therefore, we conclude that  $u^{n} \geq u^{n-1} \geq 0$ for every $n \geq 1$.
	
	Using similar arguments based on the maximum principle, 
	it can be easily shown that $u^n \leq \overline{u}$ for any $n \geq 1$.
	In particular,  $\{u^n\}$ is bounded in $L^2((0,T)\times \Omega)$.
	Considering the pointwise limit $u(t,x) := \lim_{n\to+\infty} u^n(t,x)$ for a.e.\ $(t,x) \in (0,T) \times \Omega$, we apply the Beppo Levi theorem and deduce that $u \in L^2((0,T)\times \Omega)$ and $u^n \to u$ in $L^2((0,T)\times \Omega)$. 
	Note that such $u$ is uniquely determined by $\{u^n\}$ and	we have $\underline{
		u} \leq u \leq \overline{u}$.
	Moreover, the dominated convergence theorem gives
	\begin{equation}\label{eq:conv-eq-3}
		\int_0^T
		\int_\Omega 
		q(x) (u^{n-1}(t,x))^{\alpha} \varphi(t,x) \, \textrm{d}x\textrm{d}t
		\to 
		\int_0^T
		\int_\Omega 
		q(x) (u(t,x)))^{\alpha} \varphi(t,x) \, \textrm{d}x\textrm{d}t
	\end{equation}
	for any $\varphi \in L^2((0,T) \to X_0^s(\Omega))$.
	
	Let us prove that $u$ is a finite energy solution of \eqref{eq:P}.
	For this purpose, we first show that $\{u^n\}$ is bounded in $L^2((0,T) \to X_0^s(\Omega))$.
	Since $u_n$ is a finite energy solution of \eqref{eq:un}, we use $u^n$ itself as a test function and obtain
	\begin{align*}
		\int_0^T \langle u^n_t(t),u^n(t,\cdot) \rangle \, \textrm{d}t
		+
		\int_0^T \mathcal{E}(u^n(t,\cdot), u^n(t,\cdot)) \, \textrm{d}t
		=
		\int_0^T
		\int_\Omega 
		q(x) (u^{n-1}(t,x))^{\alpha}u^{n}(t,x) \, \textrm{d}x\textrm{d}t.
	\end{align*}
	Therefore, we get
	\begin{align}
		\hspace{-2em}
		\|u^n\|_{L^2((0,T) \to X_0^s(\Omega))}^2
		&\equiv
		\int_0^T \mathcal{E}(u^n(t,\cdot), u^n(t,\cdot)) \, \textrm{d}t
		\\
		\label{eq:un-bound}
		&=
		\int_0^T
		\int_\Omega 
		q(x) (u^{n-1}(t,x))^{\alpha}u^{n}(t,x) \, \textrm{d}x\textrm{d}t
		-
		\int_0^T \langle u^n_t(t),u^n(t,\cdot) \rangle \, \textrm{d}t.
	\end{align}
	Using the integration by parts (see, e.g., \cite[Lemma~7.3]{roub}) and recalling that $u^n(0,\cdot)=0$ in $\Omega$ and $u^n \geq 0$ for all $n$, we have
	\begin{align*}
		\int_0^T \langle u^n_t(t),u^n(t,\cdot) \rangle \, \textrm{d}t
		=
		\frac{1}{2}
		\int_\Omega (u^n(T,x))^2 \, \textrm{d}x
		-
		\frac{1}{2}
		\int_\Omega (u^n(0,x))^2 \, \textrm{d}x
		\geq
		0.
	\end{align*}
	Thus, in view of the upper bound $u^n \leq \bar{u}$ for all $n$, we deduce from \eqref{eq:un-bound} that
	\begin{equation}\label{eq:uninf}
		\|u^n\|_{L^2((0,T) \to X_0^s(\Omega))}^2
		\leq 
		\int_0^T
		\int_\Omega 
		q(x) (\bar{u}(t,x))^{\alpha+1} \, \textrm{d}x\textrm{d}t
		=
		\text{const} <+\infty.
	\end{equation}
	
	The boundedness of
	$\{u^n\}$ in $L^2((0,T) \to X_0^s(\Omega))$  implies the existence of $w \in L^2((0,T) \to X_0^s(\Omega))$ such that
	$u^n \to w$ weakly in $L^2((0,T) \to X_0^s(\Omega))$, up to a subsequence.
	Since the embedding $L^2((0,T) \to X_0^s(\Omega)) \hookrightarrow L^2((0,T) \times \Omega)$ is continuous, 
	$u^n \to w$ weakly in $L^2((0,T) \times \Omega)$.
	Recalling that $u^n \to u$ strongly in $L^2((0,T) \times \Omega)$,
	we deduce that $w=u$.
	Since the pointwise limit $u$ is unique by construction, we conclude that $u^n \to u$ weakly in $L^2((0,T) \to X_0^s(\Omega))$
	over the whole sequence. 
	Therefore, recalling from Section~\ref{sec:preliminaries} that $\int_0^T \mathcal{E}(v(t,\cdot), w(t,\cdot))\,\textrm{d}t$ is a scalar product in the Hilbert space $L^2((0,T) \to X_0^s(\Omega))$, we get
	\begin{equation}\label{eq:conv-eq-2}
		\int_0^T \mathcal{E}(u^n(t,\cdot), \varphi(t,\cdot)) \, \textrm{d}t
		\to 
		\int_0^T \mathcal{E}(u(t,\cdot), \varphi(t,\cdot)) \, \textrm{d}t
	\end{equation}
	for any $\varphi \in L^2((0,T) \to X_0^s(\Omega))$.
	
	Let us show now that $\{u^n_t\}$ is bounded in $L^2((0,T) \to (X_0^s(\Omega))^*)$.
	We have, by definition,
	\begin{align*}
		\|u^n_t\|_{L^2((0,T) \to (X_0^s(\Omega))^*)}^2
		=
		\int_0^T
		\|u^n_t(t)\|_{(X_0^s(\Omega))^*}^2 \,\textrm{d}t
		=
		\int_0^T
		\left(
		\sup_{\xi \in X_0^s(\Omega) \setminus \{0\}}
		\left\{
		\frac{\langle u^n_t(t),\xi \rangle}{\|\xi\|_{X_0^s(\Omega)}}
		\right\}
		\right)^2\textrm{d}t.
	\end{align*}
	Thanks to \eqref{eq:gxi}, \eqref{eq:weak-f-eq3}, and \ref{Q1}, 
	for a.e.\ $t \in (0,T)$ and any $\xi \in X_0^s(\Omega)$ we obtain
	\begin{align*}
		\langle u^n_t(t),\xi \rangle
		&=
		-\langle J{u^n}(t), \xi \rangle
		+
		\int_\Omega q(x) (u^{n-1}(t,x))^\alpha \xi(x) \,\textrm{d}x
		\\
		&\leq
		\|u^n\|_{X_0^s(\Omega)}
		\|\xi\|_{X_0^s(\Omega)}
		+
		\|q\|_{L^{\infty}(\Omega)}
		\|(u^{n-1})^{\alpha}\|_{L^{2}(\Omega)}
		\|\xi\|_{L^{2}(\Omega)}
		\\
		&\leq
		\left(
		\|u^n\|_{X_0^s(\Omega)}
		+
		C \|q\|_{L^{\infty}(\Omega)}
		\|\overline{u}^{\alpha}\|_{L^{2}(\Omega)}
		\right)
		\|\xi\|_{X_0^s(\Omega)},
	\end{align*}
	where $C>0$ is given by the continuous embedding $X_0^s(\Omega) \hookrightarrow L^2(\Omega)$.
	Then, using the simple estimate $(a+b)^2 \leq 2a^2+2b^2$ and the first inequality in \eqref{eq:uninf}, we arrive at
	$$
	\|u^n_t\|_{L^2((0,T) \to (X_0^s(\Omega))^*)}^2
	\leq 
	2 \int_0^T
	\int_\Omega 
	q(x) (\bar{u}(t,x))^{\alpha+1} \, \textrm{d}x\textrm{d}t
	+
	2 T C^2 \|q\|_{L^{\infty}(\Omega)}^2
	\|\overline{u}^{\alpha}\|_{L^{2}(\Omega)}^2.
	$$
	In view of the boundedness of $\overline{u}$, we conclude that $\{u^n_t\}$ is bounded in $L^2((0,T) \to (X_0^s(\Omega))^*)$.
	This implies that $\{u^n_t\}$ converges weakly in 
	$L^2((0,T) \to (X_0^s(\Omega))^*)$ to some $v \in L^2((0,T) \to (X_0^s(\Omega))^*)$.
	Then, \cite[Proposition~23.19]{zeid} yields $v = u_t$, and hence we have
	\begin{equation}\label{eq:conv-eq-1}
		\int_0^T \langle u^{n}_t(t),\varphi(t,\cdot) \rangle \, \textrm{d}t
		\to 
		\int_0^T \langle u_t(t),\varphi(t,\cdot) \rangle \, \textrm{d}t
	\end{equation}
	for any $\varphi \in L^2((0,T) \to X_0^s(\Omega))$.
	
	Finally, combining \eqref{eq:conv-eq-3}, \eqref{eq:conv-eq-2}, and \eqref{eq:conv-eq-1}, we conclude that $u$ is a nontrivial nonnegative bounded finite energy solution of the problem \eqref{eq:P}.
	
	\smallskip
	In order to finish the proof, it remains to justify that positive time shifts of $u$ also generate solutions of \eqref{eq:P}.
	Taking into account that the equation in \eqref{eq:P} is autonomous in time, it is enough to show that the extension of $u$ by zero to $(-\tau,T)$ (denoted again by $u$) for a given time shift $\tau>0$ 
	is a finite energy solution of \eqref{eq:P} on $(-\tau,T)$.
	It is clear from Definition \ref{def:sol-parab} that the only fact deserving a justification is that the function $w$, defined as an extension of $u_t$ by zero to $(-\tau,T)$, is a distributional derivative of $u$.
	For this purpose, it is sufficient to show that, for an arbitrary fixed $v \in \X$, the function $t \mapsto \langle w(t), v\rangle$ is a weak derivative of  
	$t \mapsto \int_\Omega u(t,x) v(x) \,\textrm{d}x$ a.e.\ in $(-\tau,T)$, 
	that is,
	\begin{equation}\label{eq:weakder1}
		\int_{0}^T \left(\int_\Omega u(t,x) v(x) \,\textrm{d}x\right) \varphi'(t) \,\textrm{d}t
		=
		-\int_{0}^T \langle w(t), v\rangle \varphi(t) \,\textrm{d}t
		\quad \text{for any}~ \varphi \in C_0^\infty(-\tau,T),
	\end{equation}
	see \cite[Proposition~23.20~(b) and Remark~23.17]{zeid}.
	It follows from Definition \ref{def:sol-parab} that for a.e.\ $t \in (0,T)$, $t \mapsto \langle w(t), v\rangle$ is a weak derivative of  
	$t \mapsto \int_\Omega u(t,x) v(x) \,\textrm{d}x$ and it belongs to $L^2(0,T)$. Hence, the function $t \mapsto \int_\Omega u(t,x) v(x) \,\textrm{d}x$ is absolutely continuous on $[0,T]$.
	Therefore, recalling that $\|u(t,\cdot)\|_{L^2(\Omega)} \to 0$ as $t \to 0+$, we conclude from the standard integration by parts for absolutely continuous functions that \eqref{eq:weakder1} is satisfied, which completes the proof.

	\section{Proof of Proposition~\ref{prop:uniq}}\label{sec:uniq}
	Let $\alpha \geq 1$ and let $u$ be a finite energy solution of \eqref{eq:P} for some $T>0$.
	In the case $\alpha > 1$, we additionally require the boundedness of $u$. 
	Testing \eqref{eq:P} 
	with $u(t,x)\chi_{(0,\sigma)}(t)$ for any $\sigma \in (0,T)$, where $\chi_{(0,\sigma)}$ is the characteristic function of the interval $(0,\sigma)$, we obtain
	\begin{equation}\label{eq:al>11}
		\int_0^\sigma \langle u_t(t),u(t,\cdot) \rangle \, \textrm{d}t
		+
		\int_0^\sigma \mathcal{E}(u(t,\cdot), u(t,\cdot)) \, \textrm{d}t
		=
		\int_0^\sigma \int_\Omega q(x) |u(t,x)|^{\alpha+1} \,\textrm{d}x \textrm{d}t.
	\end{equation}
	Performing the integration by parts (see, e.g., \cite[Lemma~7.3]{roub}), we get
	\begin{equation}\label{eq:al>112}
		\int_0^\sigma \langle u_t(t),u(t,\cdot) \rangle \, \textrm{d}t
		=
		\frac{1}{2} \int_\Omega u^2(\sigma,x) \,\textrm{d}x
		-
		\frac{1}{2} \int_\Omega u^2(0,x) \,\textrm{d}x
		=
		\frac{1}{2}\|u(\sigma,\cdot)\|_{L^2(\Omega)}^2,
	\end{equation}
	and, by \eqref{eq:Euu}, 
	\begin{equation}\label{eq:al>113}
		\mathcal{E}(u(t,\cdot), u(t,\cdot)) = \|u(t,\cdot)\|_{\X}^2 
		\geq 0 
		\quad \text{for a.e.}~ t \in (0,\sigma).
	\end{equation}
	By the assumption \ref{Q1} and the boundedness of $u$ in the case $\alpha > 1$, we obtain the existence of a constant $C_0>0$, independent of $\sigma$, such that 
	\begin{equation}\label{eq:al>114}
		\int_\Omega q(x) |u(t,x)|^{\alpha+1} \,\textrm{d}x
		\leq
		\|q\|_{L^{\infty}(\Omega)}
		\|u(t,\cdot)\|_{L^{\infty}(\Omega)}^{\alpha-1} \|u(t,\cdot)\|_{L^{2}(\Omega)}^2 
		\leq 
		C_0 \|u(t,\cdot)\|_{L^{2}(\Omega)}^2
	\end{equation}
	for a.e.\ $t \in (0,\sigma)$.
	Here we note that if $\alpha=1$, then the term $\|u(t,\cdot)\|_{L^{\infty}(\Omega)}^{\alpha-1}$ does not appear in \eqref{eq:al>114}.
	Substituting \eqref{eq:al>112}, \eqref{eq:al>113}, and \eqref{eq:al>114} into \eqref{eq:al>11}, we deduce that
	$$
	\frac{1}{2}\|u(\sigma,\cdot)\|_{L^2(\Omega)}^2
	\leq 
	C_0 \int_0^\sigma  \|u(t,\cdot)\|_{L^2(\Omega)}^2\,\textrm{d}t.
	$$
	Thus, recalling that $u \in C([0,T] \to L^2(\Omega))$ (see Section~\ref{sec:preliminaries}), the function $y(\sigma):=\|u(\sigma,\cdot)\|_{L^2(\Omega)}^2$, $\sigma \in (0,T)$, is continuous and
	satisfies
	$$
	y(\sigma) \leq 2\, C_0 \int_0^\sigma  y(t)\,\textrm{d}t.
	$$
	The Gr\"onwall inequality \cite[Eq.~(1.66), with $C:=0$, $a:= 2\,C_0$, and $b:=0$]{roub}
	yields
	$y(\sigma)\leq 0$. Consequently, we have  
	$\|u(\sigma,\cdot)\|_{L^2(\Omega)}^2=0$ for any $\sigma \in (0,T)$, and hence the uniqueness follows.
	
	\begin{remark}
		It is not hard to see from the arguments above that the result of Proposition~\ref{prop:uniq}~\ref{prop:uniq:2} remains valid for the problem \eqref{eq:P} with a more general nonlinearity of the form $q(x)f(u)$, where $f$ is Lipschitz continuous, since in this case an estimate corresponding to \eqref{eq:al>114} holds true.
	\end{remark}

	\addcontentsline{toc}{section}{\refname}
	\small

\end{document}